\documentclass[11pt]{article}
\usepackage{amsfonts}
\usepackage{amsmath}
\usepackage{amsthm}
\usepackage{amscd}
\usepackage{amsmath}
\usepackage{amssymb}
\usepackage{amscd}
\usepackage{amsfonts}
\usepackage{amsbsy}
\usepackage{graphicx}

\usepackage{booktabs}
\usepackage{makecell}
\usepackage{enumitem}
\usepackage{tikz}
\usepackage{booktabs}
\usepackage{multirow}
\usepackage{diagbox}
\usepackage{appendix}
\usepackage{mathrsfs}

\usepackage{caption}
\usepackage{float}
\usepackage{subcaption}
\usepackage{authblk}
\usepackage{caption}
\usepackage{subcaption}

\usepackage{hyperref}
\hypersetup{colorlinks=true,citecolor=blue,%
        linkcolor=blue,urlcolor=blue,bookmarksnumbered=true,
        bookmarksopen=true,bookmarksopenlevel=1,breaklinks=true}
\def\min{\mathsf{min}}

\textheight=615pt \textwidth=360pt
\newtheorem{theorem}{Theorem}[section]
\newtheorem{proposition}[theorem]{Proposition}
\newtheorem{lemma}[theorem]{Lemma}
\newtheorem{corollary}[theorem]{Corollary}

\theoremstyle{definition}

\def\min{\mathsf{min}}
\setcounter{tocdepth}{2}

\numberwithin{equation}{section}

\numberwithin{figure}{section}
\usepackage{booktabs}
\usepackage{subcaption}

\begin{document}
\title
{Linear multiplicative noise destroys a two-dimensional attractive compact manifold of three-dimensional Kolmogorov systems}

\author{Dongmei Xiao\footnote{School of Mathematical Sciences, CMA-Shanghai, Shanghai Jiao Tong University, Shanghai 200240, China, e-mail: \href{mailto:xiaodm@sjtu.edu.cn}{xiaodm@sjtu.edu.cn}}\quad Shengnan Yin\footnote{Huantai No.1 Middle School, 256400, Shandong, China, e-mail: \href{mailto:snyin06@163.com}{snyin06@163.com}}\quad Chenwan Zhou\footnote{School of Mathematical Sciences, CMA-Shanghai, Shanghai Jiao Tong University, 200240, Shanghai, China, e-mail: \href{mailto:daydayupzcw@sjtu.edu.cn}{daydayupzcw@sjtu.edu.cn}}}

 \date{}

\maketitle
\begin{abstract}
In the paper we first characterize three-dimensional Kolmogorov systems possessing a two-dimensional invariant sphere in $\mathbb{R}^3$, then establish a global attracting
criterion for this invariant sphere in $\mathbb{R}^3$ except the origin, and give global dynamics with isolated equilibria on the sphere. Finally, we consider the persistence of the attractive invariant sphere under the perturbation induced by linear multiplicative Wiener noise. It is shown that suitable noise intensity can destroy the sphere and lead to bifurcation of stationary measures.

\medskip
\noindent
{\bf MSC2020 subject classifications:} 34C05, 34C45, 34F10, 37H20.

\medskip
\noindent
{\bf Keywords:}  Kolmogorov system, linear multiplicative noise, attractive compact manifold, global dynamics, stochastic bifurcation
\end{abstract}
\section{Introduction}
Kolmogorov systems was first proposed by Kolomogorov in \cite{kl} to describe  the growth rate of populations in a community of $n$ interacting species in population dynamics, which is defined by the following  ordinary differential equations,
\begin{equation}\label{sys1.1}
\frac{dx_i(t)}{dt}=x_i(t)G_i(x_1(t),...,x_n(t)), i=1,...,n,
\end{equation}
where $(x_1, ...,x_n) \in \mathbb{R}_+^n=\{x\in \mathbb{R}^n: x=(x_1,...,x_n), x_i\geq 0\}$,  and $G_i(x_1, ..., x_n)$ is continuous differentiable, $i=1,...,n$. The dynamical behavior of system \eqref{sys1.1} indicates the changing law of populations in the community, and the extinction and coexistence of species correspond to the existence of some attractive invariant sets for system \eqref{sys1.1}. Hence, the study on the existence and structure of attractive invariant set of system \eqref{sys1.1} is a central topic in population dynamics.  Arneodo et. al. in \cite{StrangeAttractor} observed chaotic behaviour of a class of three-dimensional Lotka-Volterra systems by numerical simulation. Note that Lotka-Volterra system is Kolomogorov system \eqref{sys1.1} with linear polynomials $G_i(x_1(t),...,x_n(t))$ for $i=1,2,\cdots,n$.
Almost at the same time  Busse and his collaborators in \cite{busse1, busse2, busse3}  studied  the turbulent convection in a fluid layer by a three dimensional Kolmogorov system and pointed out that the occurrence of turbulent  was approximately described by a manifold in the mode
space and disturbances may come from the external environment noise.  It is well known that the existence of invariant manifolds plays important role in understanding global dynamics of dynamical systems. Li in \cite{Li-Michael1, Li-Michael2} found that the existence of invariant manifolds  has some implications to exclude the existence of periodic solutions, which greatly improved higher dimensional Dulac criterion. On the other hand, the external environment noise induce a random perturbation of dynamical systems. The change of steady measures and persistence of invariant manifolds under random perturbations attract many mathematicians, see \cite{Arnold, jiang1, HJLY1, HJLY2, HJLY4, Zhao} and reference therein.
Inspired by aforementioned remarkable works, we consider two problems in mathematics:  what kinds of three-dimensional Kolmogorov systems have two-dimensional attractive invariant compact  manifold in $\mathbb{R}^3$? What happens the two-dimensional attractive invariant compact  manifold under noise perturbation?

The aim of this paper is to explore the conditions of three-dimensional polynomial Kolmogorov systems having two-dimensional attractive invariant compact manifold  $\mathbb{S}^2$ (the euclidean unit sphere) in $\mathbb{R}^3$, study global dynamics of this Kolmogorov system and it's stochastic dynamics under the perturbation of linear multiplicative Wiener noise, and discuss bifurcation of stationary measures  when the noise intensity changes. 

In Section \ref{3d determinisitc}, using Darboux theory, we give the sufficient and necessary conditions for  three-dimensional cubic polynomial Kolmogorov systems possessing invariant compact manifold $\mathbb{S}^2$ (see Proposition \ref{thm1}), and establish a global attracting criterion for this invariant sphere in $\mathbb{R}^3\setminus\{O\}$ by Lyapunov function (see  Theorem \ref{attractor}). Different from the results in \cite{Li-Michael1}, we find that the  Kolmogorov system has either periodic orbits or non-periodic orbits on this invariant manifold  $\mathbb{S}^2$ (see Theorem \ref{global dynamics} and Figure \ref{global dynamcis}). 

Further, in Section \ref{stochastic 3d}, we consider the  Kolmogorov system with attractor $\mathbb{S}^2$ under the perturbation
induced by linear multiplicative Wiener noise. Combined Lyapunov function coming from the structure of the associated deterministic system with the Doss-Sussmann transform in \cite{Doss, Sussmann}, we prove that there exists a threshold $\sigma_0$ such that when the noise intensity $\sigma>\sigma_0$, the noise destroys the attracting invariant sphere $\mathbb{S}^2$. Moreover, the change of the noise intensity $\sigma$ in neighborhoods of some thresholds leads to transitions of stationary measures, that is,  there exists another threshold $\sigma_1<\sigma_0$ such that when $\sigma>\sigma_0$, there is a unique stationary measure; while $0<\sigma_1<\sigma<\sigma_0$ leads to at least two stationary measures; and the weaker noise $\sigma<\sigma_1$ causes at least four stationary measures (see Theorem \ref{stochastic dynamics}).

It is worth noting that there have been many associated works on additive noise such as Crauel and Flandoli \cite{Crauel-Flandoli}, Brzez\'niak et. al. \cite{braza} and references therein. Compared with additive noise, there is less study on multiplicative noise. Recently, we studied stochastic bifurcations of a three-dimensional Kolmogorov system with the same intrinsic growth rate under the perturbation of linear multiplicative noise, see \cite{zhou2024stochastic}. Unfortunately, the methods used in \cite{Crauel-Flandoli, braza, zhou2024stochastic}  can not be directly  applied to deal with stochastic bifurcations of our three-dimensional Kolmogorov system with the different intrinsic growth rate by linear multiplicative noise perturbing. Doss-Sussmann transform
and  Lyapunov function are our key tools in this paper.

\section{Three-dimensional polynomial Kolmogorov systems with an invariant sphere} \label{3d determinisitc}
In this section, we consider three-dimensional polynomial Kolmogorov differential systems
\begin{equation}\label{Kol3}
\frac{dx_i}{dt}=x_iG_i(x_1,x_2,x_3), \ i=1,2,3,
\end{equation}
where $(x_1,x_2,x_3)\in \mathbb{R}^3$, $x_iG_i(x_1,x_2,x_3), i=1,2,3,$ are coprime polynomials, and the degree of system \eqref{Kol3} is denoted by $m=\max_{i=1,2,3}{\textrm{deg}}\left(x_iG_i(x_1,x_2,x_3)\right)$.
We first give the necessary condition for system \eqref{Kol3} having an isolated invariant sphere $\mathbb{S}^2$ as follows.

\begin{proposition}\label{m2}
If three-dimensional system \eqref{sys1.1} has an isolated invariant sphere $\mathbb{S}^2$, then the degree $m$ of system \eqref{Kol3} satisfies $m\ge 3$.
\end{proposition}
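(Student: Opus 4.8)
The plan is to argue by contradiction: assume system \eqref{Kol3} has an isolated invariant sphere $\mathbb{S}^2$ and derive a constraint that forces $m \ge 3$. The essential tool is that for a Kolmogorov system, each coordinate plane $\{x_i = 0\}$ is invariant (since $\dot x_i = x_i G_i$ vanishes identically on $x_i=0$). Consequently the intersections of these invariant planes with the sphere $\mathbb{S}^2$ — the three great circles $\mathbb{S}^2 \cap \{x_i = 0\}$ — are themselves invariant curves of the flow restricted to $\mathbb{S}^2$. The six points where these great circles meet the coordinate axes, namely $(\pm 1,0,0)$, $(0,\pm 1,0)$, $(0,0,\pm 1)$, must therefore be equilibria of the full system, since each lies on two of the invariant planes and on the sphere.

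First I would make the invariance of $\mathbb{S}^2$ analytic: the sphere is the zero set of $H(x) = x_1^2 + x_2^2 + x_3^2 - 1$, and invariance means $H$ is a Darboux invariant, i.e. there is a polynomial cofactor $K(x)$ with
\begin{equation}\label{darboux-eq}
\sum_{i=1}^3 \frac{\partial H}{\partial x_i}\, x_i G_i(x) = K(x)\, H(x),
\qquad\text{that is}\qquad
2\sum_{i=1}^3 x_i^2 G_i(x) = K(x)\bigl(x_1^2+x_2^2+x_3^2-1\bigr).
\end{equation}
Comparing degrees in \eqref{darboux-eq}, the left side has degree $2 + (m-1) = m+1$ (since $\deg(x_iG_i) \le m$ gives $\deg G_i \le m-1$), while the right side has degree $\deg K + 2$, so $\deg K = m - 1$. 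This identity is the central structural constraint, and I expect the divisibility it encodes to be the engine of the proof.

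Next I would rule out the low-degree cases $m = 1$ and $m = 2$. For $m = 1$ (Lotka–Volterra, $G_i$ constant) the left-hand side of \eqref{darboux-eq} is a sum of terms $x_i^2 \cdot (\text{const})$, a degree-$2$ form with no constant term, which cannot equal $K\cdot H$ for $K$ of degree $0$ because $K\cdot H$ would contain a nonzero constant term $-K$ unless $K \equiv 0$, forcing all $G_i \equiv 0$ and contradicting that the $x_i G_i$ are coprime and define a genuine system. For $m = 2$, one writes each $G_i$ as an affine-plus-quadratic polynomial of degree $\le 1$, so $\deg K = 1$; here I would extract the top-degree (degree-$3$) and constant parts of \eqref{darboux-eq} and show the resulting coefficient equations are inconsistent with $\mathbb{S}^2$ being an \emph{isolated} invariant sphere — the key point being that matching forces a degenerate factorization in which $H$ shares a common factor with the vector field or the sphere fails to be isolated among a one-parameter family of invariant quadrics. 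The isolation hypothesis is what excludes these degenerate configurations.

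The step I expect to be the main obstacle is the $m = 2$ elimination: unlike $m = 1$, a generic cubic-free Kolmogorov system has enough free coefficients that naively the Darboux identity \eqref{darboux-eq} might seem satisfiable, so the argument must use the \emph{isolated} hypothesis in an essential way rather than just degree counting. Concretely, I would show that when $m = 2$ the cofactor $K$ of degree $1$ forces the quadratic parts of the $G_i$ to be proportional to $x_i$ in a way that makes every concentric sphere $x_1^2+x_2^2+x_3^2 = c$ invariant (a first integral situation), contradicting isolation; handling this proportionality cleanly — keeping track of cross terms $x_ix_j$ that must cancel — is the delicate computation. Once both low-degree cases are excluded, $m \ge 3$ follows, completing the proof.
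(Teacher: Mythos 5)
Your overall skeleton is sound and, at its core, coincides with the paper's: invariance of $\mathbb{S}^2$ gives the Darboux identity $2\sum_{i=1}^3 x_i^2 G_i(x) = K(x)\,(x_1^2+x_2^2+x_3^2-1)$ with $\deg K \le m-1$, and the isolation hypothesis enters only to guarantee $K \not\equiv 0$ (if $K \equiv 0$ then $x_1^2+x_2^2+x_3^2$ is a first integral and every concentric sphere is invariant, so $\mathbb{S}^2$ is not isolated). Your $m=1$ elimination is correct and is exactly the paper's constant-term comparison. However, your $m=2$ elimination --- which you yourself flag as the main obstacle --- is left as a declaration of intent (``I would show\dots''), and the route you sketch for it is misdirected: you propose to examine the top-degree (degree-$3$) part of the identity and to track cancellation of cross terms $x_ix_j$, and you assert that this case ``must use the isolated hypothesis in an essential way rather than just degree counting.'' Neither is so, and as written the decisive case is not proved.

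The step you are missing is one line long. The left-hand side $2\sum_i x_i^2 G_i$ has no constant term \emph{and no linear term} (every monomial is divisible by some $x_i^2$), while for $K = k_0 + k_1x_1 + k_2x_2 + k_3x_3$ the product $K\cdot(x_1^2+x_2^2+x_3^2-1)$ has constant term $-k_0$ and linear part $-(k_1x_1+k_2x_2+k_3x_3)$. Comparing low-order terms forces $k_0=k_1=k_2=k_3=0$, i.e.\ $K\equiv 0$ whenever $\deg K\le 1$, which by your own first-integral observation contradicts isolation. This is precisely the paper's proof: writing $K=\sum_{j=0}^{m-1}K_j$ in homogeneous parts, the comparison gives $K_0=0$ and $K_1\equiv 0$ in one stroke, so any nonzero cofactor satisfies $\deg K\ge 2$, hence $m-1\ge \deg K\ge 2$ and $m\ge 3$, with no case split between $m=1$ and $m=2$ and no need for top-degree analysis or the ``delicate'' cross-term bookkeeping you anticipate. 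Two smaller defects: your opening paragraph about invariant coordinate planes and the equilibria $(\pm 1,0,0)$, $(0,\pm 1,0)$, $(0,0,\pm 1)$ is correct but plays no role in the rest of your argument, and the degree relation should be $\deg K\le m-1$ rather than $\deg K = m-1$, since the leading terms of $\sum_i x_i^2G_i$ may cancel; only the inequality is needed, and only the inequality is generally true.
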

\begin{proof}
Since $\mathbb{S}^2$ is an isolated invariant sphere, we have that  $F(x_1,x_2,x_3)=x_1^2+x_2^2+x_3^2-1$ is a Darboux polynomial of system \eqref{Kol3} with a nonzero cofactor $K(x_1,x_2,x_3)$ by Darboux Thoerem, where
we say that $F(x_1,x_2,x_3)$ is a {\it Darboux polynomial} of system \eqref{Kol3} if there exists a
polynomial $K(x_1,x_2,x_3)$ of degree at most $m-1$, called the {\it cofactor} of $F(x_1,x_2,x_3)$, such that
\begin{equation}\label{eq1}
\frac{dF(x_1,x_2,x_3)}{dt}\left|_{\eqref{Kol3}}\right.=\sum_{i=1}^3\frac{\partial F}{\partial x_i}x_iG_i(x_1,x_2,x_3)=F(x_1,x_2,x_3)K(x_1,x_2,x_3),
\end{equation}
see \cite{Darboux}. Obvious, $F(x_1,x_2,x_3)$ is a first integral of system \eqref{Kol3} if the cofactor $K(x_1,x_2,x_3)\equiv 0$.

Assume that $K(x_1,x_2,x_3)=\sum_{j=0}^{m-1} K_j(x_1,x_2,x_3)$ in which $K_j(x_1,x_2,x_3)$ is a homogeneous polynomial of $(x_1,x_2,x_3)$ with degree $j$.
 Then \eqref{eq1} becomes
\begin{equation}\label{Least_degree}
\sum_{i=1}^32x_i^2G_i(x_1,x_2,x_3)=-\sum_{j=0}^{m-1} K_j(x_1,x_2,x_3)+(x_1^2+x_2^2+x_3^2)\left(\sum_{j=0}^{m-1} K_j(x_1,x_2,x_3)\right).
\end{equation}
It can be seen that the polynomial of left part of \eqref{Least_degree} does not have constant term and linear term. By comparing the coefficients of the polynomials in the same power at \eqref{Least_degree}, one has $K_0=0$ and $K_1(x_1,x_2,x_3)\equiv 0$. This means the degree of cofactor $K(x_1,x_2,x_3)$ is at least two. As a result, $m\geq 3$.
\end{proof}

Proposition \ref{m2} tells us that  three-dimensional Lotka-Volterra systems can not have an isolated invariant sphere $\mathbb{S}^2$. In order to avoid the tedious calculation, we consider the conditions for the following cubic polynomial Kolmogorov differential systems possessing isolated invariant sphere $\mathbb{S}^2$.
\begin{equation}\label{sys1.2}
\begin{cases}
\frac{dx_1}{dt}=x_1\,\big(r_1+\sum_{i=1}^3a_ix_i+\sum_{1\le i\le j\le 3}a_{ij}x_ix_j\big),\\
\frac{dx_2}{dt}=x_2\,\big(r_2+\sum_{i=1}^3b_ix_i+\sum_{1\le i\le j\le 3}b_{ij}x_ix_j\big),\\
\frac{dx_3}{dt}=x_3\,\big(r_3+\sum_{i=1}^3c_ix_i+\sum_{1\le i\le j\le 3}c_{ij}x_ix_j\big),\\
\end{cases}
\end{equation}
 where $r_i,a_i,b_i$, $c_i$,  $a_{ij}$, $b_{ij}$ and $c_{ij}$ are real parameters, here $i,j\in \{1,2,3\}$.

\subsection{Cubic polynomial Kolmogorov differential systems with an attractive invariant sphere $\mathbb{S}^2$}

Now we characterize system \eqref{sys1.2} having  an invariant sphere $\mathbb{S}^2$ in $\mathbb{R}^3$ as follows.

\begin{proposition}\label{thm1}
System \eqref{sys1.2} has an invariant sphere $\mathbb{S}^2$ in $\mathbb{R}^3$ if and only if
\begin{equation}\label{SNC}
\begin{cases}
a_i=b_i=c_i=0, \ i=1,2,3,\\
a_{ij}=b_{ij}=c_{ij}=0, \ i\not=j,\\
a_{11}=-r_1, a_{22}=-(r_1+r_2+b_{11}),\\
b_{22}=-r_2, b_{33}=-(r_2+r_3+c_{22}),\\
c_{11}=-(r_1+r_3+a_{33}),  c_{33}=-r_3.\\
\end{cases}
\end{equation}
\end{proposition}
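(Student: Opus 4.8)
The plan is to reduce the claim entirely to the Darboux identity already set up in the proof of Proposition~\ref{m2}. Since $\mathbb{S}^2=\{F=0\}$ with $F=x_1^2+x_2^2+x_3^2-1$ irreducible, the sphere is invariant for \eqref{sys1.2} if and only if $F$ is a Darboux polynomial, i.e.\ \eqref{eq1} holds for some cofactor $K$ of degree at most $m-1=2$. The argument in Proposition~\ref{m2} already forces $K_0=0$ and $K_1\equiv 0$, so throughout I may take $K=K_2$ to be homogeneous of degree two. The whole proof then amounts to expanding \eqref{Least_degree} for the cubic field \eqref{sys1.2} and comparing coefficients grouped by total degree; both implications fall out of this single computation.

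For the necessity direction, I would write $G_i=r_i+(\text{linear})+(\text{quadratic})$ and note that $2\sum_i x_i^2 G_i$ then splits into homogeneous parts of degrees $2,3,4$, while the right-hand side $(x_1^2+x_2^2+x_3^2)K_2-K_2$ has only degree-$4$ and degree-$2$ parts. Matching degree $2$ gives $K_2=-2(r_1x_1^2+r_2x_2^2+r_3x_3^2)$. Matching degree $3$ shows the entire cubic part $2\sum_i x_i^2\cdot(\text{linear part of }G_i)$ must vanish, which immediately yields $a_i=b_i=c_i=0$ for $i=1,2,3$. The substantive step is matching degree $4$: here I substitute the quadratic parts $Q_i$ and the explicit $K_2$ into the identity $x_1^2 Q_1+x_2^2 Q_2+x_3^2 Q_3=-(x_1^2+x_2^2+x_3^2)(r_1x_1^2+r_2x_2^2+r_3x_3^2)$.

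Reading off the degree-$4$ monomials one class at a time recovers \eqref{SNC}. The pure fourth powers $x_i^4$ give $a_{11}=-r_1$, $b_{22}=-r_2$, $c_{33}=-r_3$. The monomials of ``cubic'' type $x_i^3 x_j$ together with the triple-mixed monomials $x_1^2 x_2 x_3$, $x_1 x_2^2 x_3$, $x_1 x_2 x_3^2$ appear only on the left and force all nine off-diagonal coefficients $a_{ij},b_{ij},c_{ij}$ with $i\neq j$ to vanish. Finally the mixed squares $x_i^2 x_j^2$ give the three coupling relations $a_{22}+b_{11}=-(r_1+r_2)$, $b_{33}+c_{22}=-(r_2+r_3)$, $a_{33}+c_{11}=-(r_1+r_3)$, which are exactly the last three lines of \eqref{SNC}. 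For sufficiency I would simply reverse this: assuming \eqref{SNC}, a direct substitution verifies \eqref{eq1} with $K=-2(r_1x_1^2+r_2x_2^2+r_3x_3^2)$, so $F$ is a Darboux polynomial and $\mathbb{S}^2$ is invariant. The only real obstacle is the bookkeeping in the degree-$4$ matching --- in particular keeping the three families of off-diagonal monomials (the $x_i^3 x_j$, the triple products, and the $x_i^2 x_j^2$) separate --- but no step requires more than comparing coefficients.
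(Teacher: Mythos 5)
Your proposal is correct and follows essentially the same route as the paper: both reduce invariance of $\mathbb{S}^2$ to the Darboux identity \eqref{eq1} with cofactor $K=K_2$ (using the conclusion of Proposition~\ref{m2}), then obtain \eqref{SNC} by comparing coefficients, with sufficiency verified by direct substitution of $K=-2(r_1x_1^2+r_2x_2^2+r_3x_3^2)$. Your explicit organization of the comparison by homogeneous degree $2$, $3$, $4$ is just a cleaner bookkeeping of the same computation the paper performs in \eqref{eq2}--\eqref{SNC1}.
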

\begin{proof}
Assume that $\mathbb{S}^2$ is an invariant sphere. Then one has that \eqref{eq1} holds with cofactor  $K(x_1,x_2,x_3)$ of degree $2$. Moreover, it follows from \eqref{Least_degree} that $K(x_1,x_2,x_3)=K_2(x_1,x_2,x_3)$, where $K_2(x_1,x_2,x_3)$ is a homogeneous polynomial with degree 2. Thus, equation \eqref{eq1} can be written as
\begin{equation}\label{eq2}
\begin{aligned}
K_2(x_1,x_2,x_3)(x_1^2+x_2^2+x_3^2-1)&=2(r_1x_1^2+r_2x_2^2+r_3x_3^2)\\
&+2x_1^2(\sum_i^3a_ix_i)+2x_2^2(\sum_i^3b_ix_i)+2x_3^2(\sum_i^3c_ix_i)\\
&+2x_1^2(\sum_{1\leq i\leq j\leq3}a_{ij}x_ix_j)+2x_2^2(\sum_{1\leq i\leq j\leq3}b_{ij}x_ix_j)\\
&+2x_3^2(\sum_{1\leq i\leq j\leq3}c_{ij}x_ix_j).
\end{aligned}
\end{equation}
By comparing the coefficients of the polynomials in the same power of  equality \eqref{eq2},
we  immediately have
\begin{equation}\label{SNC2}
\begin{aligned}
K_2(x_1,x_2,x_3)=-2r_1x_1^2-2r_2x_2^2-2r_3x_3^2, \\
a_i=b_i=c_i=0, \ \ i=1,2,3, \\
a_{ij}=b_{ij}=c_{ij}=0, \ \ i\neq j.
\end{aligned}
\end{equation}
This yields that
\begin{equation}\label{SNC1}
\begin{aligned}
-2(r_1x_1^2+r_2x_2^2+r_3x_3^2)(x_1^2+x_2^2+x_3^2)&=
2x_1^2(a_{11}x_1^2+a_{22}x_2^2+a_{33}x_3^2)\\
&+2x_2^2(b_{11}x_1^2+b_{22}x_2^2+b_{33}x_3^2)\\
&+2x_3^2(c_{11}x_1^2+c_{22}x_2^2+c_{33}x_3^2).
\end{aligned}
\end{equation}
Hence, conditions \eqref{SNC} are true by comparing the coefficients of the polynomials in the same power of the equality \eqref{SNC1}.

 On the contrary, if system \eqref{sys1.2} satisfies condition \eqref{SNC}, then one can check that
 $$
 \frac{dF(x_1,x_2,x_3)}{dt}\left|_{\eqref{Kol3}}\right.=F(x_1,x_2,x_3)K(x_1,x_2,x_3),
 $$
 where $K(x_1,x_2,x_3)=-2r_1x_1^2-2r_2x_2^2-2r_3x_3^2$. So the set  $\mathbb{S}^2=\{(x_1,x_2,x_3)\in \mathbb{R}^3:\ F(x_1,x_2,x_3)=0\}$ is invariant by the flow of system \eqref{sys1.2},
 which implies that $\mathbb{S}^2$ is an invariant sphere. This completes the proof.
\end{proof}
Note that $K(x_1,x_2,x_3)\equiv 0$ if and only if $r_1^2+r_2^2+r_3^2=0$, which leads that $F(x_1,x_2,x_3)$ is a first integral of system \eqref{sys1.2}. Therefore, we have
\begin{corollary}
System \eqref{sys1.2} has an isolated invariant sphere $\mathbb{S}^2$ in $\mathbb{R}^3$ if and only if both \eqref{SNC}  and $r_1^2+r_2^2+r_3^2\neq0$ hold.
\end{corollary}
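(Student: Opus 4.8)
The plan is to deduce the corollary directly from Proposition \ref{thm1} together with the remark immediately preceding the statement, the conceptual key being that ``isolated'' should be read as the failure of $F(x_1,x_2,x_3)=x_1^2+x_2^2+x_3^2-1$ to be a first integral of system \eqref{sys1.2}. Proposition \ref{thm1} already supplies the equivalence between existence of an invariant sphere $\mathbb{S}^2$ and condition \eqref{SNC}, and it exhibits the cofactor $K(x_1,x_2,x_3)=-2r_1x_1^2-2r_2x_2^2-2r_3x_3^2$. So the only additional content to establish is that, among systems satisfying \eqref{SNC}, the sphere is isolated precisely when $r_1^2+r_2^2+r_3^2\neq0$, i.e.\ precisely when $K\not\equiv0$.

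First I would treat the forward implication. Suppose $\mathbb{S}^2$ is an isolated invariant sphere. Since it is in particular invariant, Proposition \ref{thm1} forces \eqref{SNC}. If moreover $r_1^2+r_2^2+r_3^2=0$, then $r_1=r_2=r_3=0$, so $K\equiv0$, and \eqref{eq1} gives $\frac{d}{dt}(x_1^2+x_2^2+x_3^2)=F\cdot K\equiv0$ along every orbit. Thus $x_1^2+x_2^2+x_3^2$ is a first integral and every sphere $\{x_1^2+x_2^2+x_3^2=R^2\}$ with $R>0$ is invariant; in particular invariant spheres of radius arbitrarily close to $1$ exist, contradicting isolation. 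Hence $r_1^2+r_2^2+r_3^2\neq0$.

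For the reverse implication I would assume \eqref{SNC} and $r_1^2+r_2^2+r_3^2\neq0$. Proposition \ref{thm1} yields that $\mathbb{S}^2$ is invariant, so it remains to verify isolation, i.e.\ that no sphere of radius $R\neq1$ is invariant. Evaluating \eqref{eq1} on the sphere of radius $R$, where $F=R^2-1$, gives $\frac{d}{dt}(x_1^2+x_2^2+x_3^2)=(R^2-1)K$ there; for invariance this must vanish on the whole sphere, which for $R\neq1$ forces $K$ to vanish identically on that sphere. But $K=-2(r_1x_1^2+r_2x_2^2+r_3x_3^2)$ is a nonzero quadratic form, and a nonzero quadratic form cannot vanish on an entire $2$-sphere of positive radius when $(r_1,r_2,r_3)\neq(0,0,0)$. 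Therefore $\mathbb{S}^2$ is the only invariant sphere and is isolated.

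The part to get exactly right is the interpretation of ``isolated'': the whole argument hinges on equating isolation of $\mathbb{S}^2$ with $K\not\equiv0$, equivalently with $F$ failing to be a first integral, so that a vanishing cofactor produces a one-parameter family of concentric invariant spheres. Once this equivalence is made precise, no genuine obstacle remains, since both directions collapse to the elementary fact that a nonzero quadratic form in $(x_1,x_2,x_3)$ does not vanish on a full sphere of positive radius.
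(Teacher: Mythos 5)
Your proof is correct and follows essentially the same route as the paper: the paper's entire justification is the one-line remark preceding the corollary, namely that $K(x_1,x_2,x_3)\equiv 0$ if and only if $r_1^2+r_2^2+r_3^2=0$, in which case $F$ is a first integral (so all concentric spheres are invariant and $\mathbb{S}^2$ cannot be isolated), combined with Proposition \ref{thm1}. Your reverse direction — using homogeneity of the quadratic form $K$ to rule out invariance of any concentric sphere of radius $R\neq 1$ — is a correct filling-in of a step the paper leaves implicit, resting on the same identification of isolation with $K\not\equiv 0$.
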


For simplicity of notations, let
$$
\alpha=\alpha_1, \beta=\alpha_2, \gamma=\alpha_3, d_1=b_{11}, d_2=a_{33}, d_3=c_{22}.
$$
Then system \eqref{sys1.2} with an invariant sphere $\mathbb{S}^2$ can be written as
\begin{equation}\label{sys1}
\begin{cases}
\frac{dx_1}{dt}=x_1\,\big(\alpha_1-\alpha_1\,x_1^2-(\alpha_1+\alpha_2+d_1)\,x_2^2+d_2\,x_3^2\big),\\
\frac{dx_2}{dt}=x_2\,\big(\alpha_2+d_1\,x_1^2-\alpha_2\,x_2^2-(\alpha_2+\alpha_3+d_3)\,x_3^2\big),\\
\frac{dx_3}{dt}=x_3\,\big(\alpha_3-(\alpha_3+\alpha_1+d_2)\,x_1^2+d_3\,x_2^2-\alpha_3\,x_3^2\big),\\
\end{cases}
\end{equation}
where $\alpha_i$ and $d_i$, $i=1,2,3$ are real parameters.

Next theorem gives the necessary and sufficient conditions of system \eqref{sys1} has a global attractor $\mathbb{S}^2$ in $\mathbb{R}^3\setminus\{O\}$.
\begin{theorem}\label{attractor}The invariant sphere $\mathbb{S}^2$ is a global attractor of system \eqref{sys1} in $\mathbb{R}^3\setminus\{O\}$ if and only if $\alpha_i>0, i=1,2,3$.
\end{theorem}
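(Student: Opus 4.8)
The plan is to exploit the Darboux structure already established for \eqref{sys1}. Writing $F(x_1,x_2,x_3)=x_1^2+x_2^2+x_3^2-1$, Proposition \ref{thm1} together with \eqref{eq1} shows that along the flow of \eqref{sys1} one has the identity $\dot F=F\,K$ with cofactor $K(x_1,x_2,x_3)=-2(\alpha_1 x_1^2+\alpha_2 x_2^2+\alpha_3 x_3^2)$. Since $F$ measures (the square of the radius minus one, hence) the deviation from the sphere, the whole argument reduces to analyzing the scalar equation $\dot u=u\,K$ satisfied by $u(t):=F(x(t))$ along any solution $x(t)$ of \eqref{sys1}.

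For the sufficiency direction I would assume $\alpha_i>0$ for all $i$, so that $K<0$ on $\mathbb{R}^3\setminus\{O\}$ and $\dot u$ has the sign opposite to $u$. Because $\mathbb{S}^2=\{F=0\}$ is invariant, uniqueness of solutions forbids $u$ from ever changing sign; hence an orbit starting strictly outside (resp. strictly inside) the sphere keeps $u>0$ and decreasing (resp. $u<0$ and increasing). In particular $V:=x_1^2+x_2^2+x_3^2$ is monotone along each solution, decreasing toward $1$ from above when $V(0)>1$ and increasing toward $1$ from below when $0<V(0)<1$. This monotonicity confines every trajectory to a compact annular shell, either $\{1\le V\le V(0)\}$ or $\{V(0)\le V<1\}$, which simultaneously gives global forward existence and a uniform lower bound $V\ge c>0$ for that orbit. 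Consequently $B:=\alpha_1x_1^2+\alpha_2x_2^2+\alpha_3x_3^2\ge \alpha_{\min}V\ge \alpha_{\min}c$ (with $\alpha_{\min}:=\min\{\alpha_1,\alpha_2,\alpha_3\}>0$) stays bounded below, and the identity $\frac{d}{dt}|u|=-2|u|\,B\le -2\alpha_{\min}c\,|u|$ forces $|u(t)|\le |u(0)|e^{-2\alpha_{\min}c\,t}\to 0$. Thus $\mathrm{dist}(x(t),\mathbb{S}^2)\to 0$ for every initial point in $\mathbb{R}^3\setminus\{O\}$, so $\mathbb{S}^2$ is a global attractor.

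For necessity I would argue by contraposition, using that each coordinate axis is invariant for \eqref{sys1}. If $\alpha_i\le 0$ for some $i$, I restrict to the $x_i$-axis, where the system collapses to the scalar equation $\dot x_i=\alpha_i x_i(1-x_i^2)$. When $\alpha_i<0$, any datum with $0<x_i(0)<1$ yields $x_i$ strictly decreasing to $0$, i.e. an orbit in $\mathbb{R}^3\setminus\{O\}$ converging to the origin rather than to $\mathbb{S}^2$; when $\alpha_i=0$ the whole axis consists of equilibria, so any point with a single nonzero coordinate $c\in(0,1)$ is a rest point lying off the sphere. In both cases some trajectory fails to be attracted to $\mathbb{S}^2$, contradicting global attraction, which establishes the contrapositive.

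The conceptual content sits entirely in the one-line relation $\dot F=FK$; the only delicate point is the sufficiency direction, where one must rule out $u$ stalling at a nonzero limit or the orbit drifting into the origin. The main (though mild) obstacle is therefore securing the uniform bound $V\ge c>0$: this is exactly what promotes the qualitative statement $u\to 0$ to genuine exponential convergence, and it rests on the observation that invariance of $\mathbb{S}^2$ blocks sign changes of $u$ and hence traps each orbit in a compact shell disjoint from $O$.
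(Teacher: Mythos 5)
Your proof is correct, and its analytic core coincides with the paper's: both reduce the theorem to the Darboux identity $\frac{d}{dt}F = FK$ with cofactor $K=-2(\alpha_1x_1^2+\alpha_2x_2^2+\alpha_3x_3^2)$ and then deduce exponential decay of $|F|$ along orbits. Where you genuinely diverge is in the two delicate sub-steps, and in both your route is more self-contained. First, for the lower bound $\|x(t)\|\ge c>0$ (needed to keep $K$ bounded away from zero), the paper invokes the fact that $O$ is a local repeller when all $\alpha_i>0$ and asserts the estimate $\inf_{t\ge 0}\|\Psi(t,x_0)\|\ge c(x_0)>0$ without further justification; you instead note that invariance of $\mathbb{S}^2$ plus uniqueness forbids $F$ from changing sign, so $V=\|x\|^2$ is monotone along each orbit (decreasing toward $1$ outside the sphere, nondecreasing toward $1$ inside), which traps the orbit in a compact shell disjoint from $O$, gives $V\ge\min\{V(0),1\}$ for free, and also settles global forward existence --- so no appeal to repeller theory is needed. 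Second, for necessity, the paper argues via the stability type of $O$ (attractor if all $\alpha_i<0$, an axis of equilibria if some $\alpha_i=0$), which as written does not address the mixed-sign case, e.g. $\alpha_1>0>\alpha_2$ with all $\alpha_i\neq 0$; your restriction to the invariant $x_i$-axis, where the flow is $\dot x_i=\alpha_i x_i(1-x_i^2)$, exhibits an explicit non-attracted orbit whenever any single $\alpha_i\le 0$, covering all cases uniformly. What the paper's version buys is brevity and reuse of the local analysis of $O$ that appears elsewhere in the section; what yours buys is a complete case analysis in the necessity direction and a rigorous, elementary derivation of the key lower bound in the sufficiency direction.
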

\begin{proof}
    Note that the origin $O$ is an equilibrium of system \eqref{sys1}
and all three eigenvalues of the Jacobian matrix at $O$ are $\alpha_1, \alpha_2, \alpha_3$. And so,
$O$ is a local repeller (attractor) of system \eqref{sys1} if $\alpha_i>0$ ($\alpha_i<0$, resp.) for all $i=1,2,3$. And $O$ is a degenerate equilibrium if at least one of $\alpha_i, i=1,2,3$ is zero. By straightforward computations, if $\alpha_i=0$, then the positive $x_i$-axis is filled with equilibria.
 This leads that there exist some $x_0\in \mathbb{R}^3 \setminus \{O\}$
such that $\omega_d (x_0)\notin  \mathbb{S}^2$ if $\alpha_i \le 0$, $i=1,2,3$. Therefore, $\alpha_i >0$, $i=1,2,3$ if $\mathbb{S}^2$ is a global attractor in $\mathbb{R}^3\setminus \{O\}$.

On the other hand, if $\alpha_i>0$, $i=1,2,3$, then $O$ is a local repeller of system \eqref{sys1}.
Hence,
for any $x_0\in \mathbb{R}^3\setminus\{O\}$
there exists a constant $c(x_0)>0$ such that the solution $\Psi(t, x_0)$ of system \eqref{sys1} passing through $x(0)=x_0$ satisfies
\begin{equation}\label{estimate0}
   \inf\limits_{t\geq 0} \|\Psi(t, x_0)\|\geq c(x_0)
   >0.
\end{equation}

Note that $\mathbb{S}^2$ is an invariant sphere of system \eqref{sys1}. Let us define
$$L(x):=x_1^2+x_2^2+x_3^2-1,\,\, x=(x_1, x_2, x_3)\in \mathbb{R}^3,$$
and without loss of generality, we assume that $\alpha_1\ge \alpha_2\ge \alpha_3>0$.
Then, by some computations,
$\forall \ x_0\in \mathbb{R}^3$,
\begin{align}\label{Leq}
\frac{dL(\Psi(t,x_0))}{dt}|_{\eqref{sys1}}&=-2(\alpha_1x_1^2+\alpha_2x_2^2+\alpha_3x_3^2)L(\Psi(t, x_0)) \\ \notag
&\le -2\alpha_3\|\Psi(t, x_0)\|^2L(\Psi(t, x_0))
\end{align}
Taking into account \eqref{Leq} and \eqref{estimate0} we have
$$\|L(\Psi(t, x_0))\|\leq \|L(x_0)\|\exp\{\int_0^t -2\alpha_3 {c^2(x_0)}ds\},\ \forall t\ge 0, x_0\in \mathbb{R}^3\setminus \{O\}.$$
Thus,
$$\displaystyle \lim_{t\rightarrow +\infty}\|L(\Psi(t, x_0))\|=0.$$
This yields that for any $x_0\in \mathbb{R}^3\setminus \{O\}$, $\omega_d(x_0)\subseteq \mathbb{S}^2$. Therefore, the invariant sphere $\mathbb{S}^2$
is a global attractor of system \eqref{sys1} in $\mathbb{R}^3\setminus \{O\}$.
\end{proof}

Therefore,  from Proposition \ref{thm1} and  Theorem \ref{attractor},  we know that the three-dimensional cubic polynomial Kolmogorov system \eqref{sys1.2}  has a global attractor in $\mathbb{R}^3\setminus\{O\}$, which is exactly $\mathbb{S}^2$, if and only if it can be written as 
\begin{equation}\label{sys0}
\begin{cases}
\frac{dx_1}{dt}=x_1\,\big(\alpha_1-\alpha_1\,x_1^2-(\alpha_1+\alpha_2+d_1)\,x_2^2+d_2\,x_3^2\big),\\
\frac{dx_2}{dt}=x_2\,\big(\alpha_2+d_1\,x_1^2-\alpha_2\,x_2^2-(\alpha_2+\alpha_3+d_3)\,x_3^2\big),\\
\frac{dx_3}{dt}=x_3\,\big(\alpha_3-(\alpha_3+\alpha_1+d_2)\,x_1^2+d_3\,x_2^2-\alpha_3\,x_3^2\big),\\
\end{cases}
\end{equation}
where $\alpha_i>0$, $i=1,2,3.$

\subsection{Global dynamics of system \eqref{sys0} with isolated equilibria}

Global dynamics of system \eqref{sys0} has been studied in \cite{zhou2024stochastic} when $0<\alpha_1=\alpha_2=\alpha_3$. In this subsection we investigate the topological classification of global dynamics of system \eqref{sys0} when at least two of $\alpha_1$, $\alpha_2$ and $\alpha_3$ are not equal and all of equilibria of system \eqref{sys0} are isolated.  Note that system \eqref{sys0} in $\mathbb{R}^3$ is symmetric with respect to the three coordinate planes $x_i=0$, $i=1,2,3$, respectively. Hence, we just need to consider system \eqref{sys0} in $\mathbb{R}_+^3=\{(x_1,x_2,x_3)\in \mathbb{R}^3: \ x_1\geq0, x_2\geq0, x_3\geq0\}$. For convenience, let us define
$$ \sigma_1=\alpha_1(\alpha_3+d_3)+\alpha_2(\alpha_1+d_2)+\alpha_3(\alpha_2+d_1), \quad \sigma_2=\sum_{i=1}^3 \alpha_i+d_i.
$$
We first study the existence and topological classification of equilibria of system \eqref{sys0} in $\mathbb{R}^3_+$. It is easy to see that $O=(0,0,0)$, $e_1=(1,0,0)$, $e_2=(0,1,0)$, $e_3=(0,0,1)$ are equilibria in $\mathbb{R}^3_+$ for any $\alpha_i$, $i=1,2,3$ and $(d_1, d_2, d_3)\in \mathbb{R}^3$. By straightforward computations, we have 

\begin{lemma}[Existence of isolated equilibria]\label{existence of equilibria}System \eqref{sys0} has only isolated equilibria in $\mathbb{R}^3_+$ if and only if $(\alpha_1+d_2)(\alpha_2+d_1)(\alpha_3+d_3)\neq0$. More precisely,
\begin{itemize}
    \item [(i)] if $\alpha_1+d_2, \alpha_2+d_1, \alpha_3+d_3$ have the same sign, then system \eqref{sys0} has  five isolated equilibria $O, e_1, e_2, e_3, Q^*$ in $\mathbb{R}^3_+$, where $Q^*=(q_1^*, q_2^*, q_3^*)$ is a positive equilibrium, here
$$Q^*=\left(\sqrt{\frac{\alpha_3+d_3}{\sigma_2}}, \sqrt{\frac{\alpha_1+d_2}{\sigma_2}}, \sqrt{\frac{\alpha_2+d_1}{\sigma_2}}\right).$$

    \item [(ii)] if at least one of $(\alpha_1+d_2)(\alpha_2+d_1)<0$ and $(\alpha_2+d_1)(\alpha_3+d_3)<0$ holds, system \eqref{sys0} has only four isolated equilibria $O, e_1, e_2, e_3$ in $\mathbb{R}^3_+$.
\end{itemize}
\end{lemma}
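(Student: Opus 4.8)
The plan is to locate every equilibrium of \eqref{sys0} in $\mathbb{R}^3_+$ by classifying points according to which coordinates vanish, since a point is an equilibrium exactly when, for each $i$, either $x_i=0$ or the $i$-th bracket $G_i$ vanishes. The patterns with at most one nonzero coordinate always produce precisely $O,e_1,e_2,e_3$: on each axis the only surviving equation is $\alpha_i(1-x_i^2)=0$, which forces $x_i=1$. For brevity I would write $A=\alpha_1+d_2$, $B=\alpha_2+d_1$, $C=\alpha_3+d_3$; the whole dichotomy turns out to be governed by whether $ABC=0$.

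For the three open two-dimensional faces I would substitute the squared coordinates as new unknowns and solve the resulting $2\times2$ linear systems. On the face $\{x_3=0\}$, for instance, setting $u=x_1^2,\ v=x_2^2$ the equations $G_1=G_2=0$ read
\[
\alpha_1 u+(\alpha_1+\alpha_2+d_1)v=\alpha_1,\qquad -d_1u+\alpha_2 v=\alpha_2,
\]
whose determinant factors as $\alpha_1\alpha_2+d_1(\alpha_1+\alpha_2+d_1)=(\alpha_1+d_1)B$. The system forces $(\alpha_1+d_1)u=-\alpha_2$ and $(\alpha_1+d_1)v=\alpha_1$ once $B\neq0$, and this has no solution with $u,v>0$: if $\alpha_1+d_1\neq0$ then $u,v$ would have opposite signs (as $\alpha_1,\alpha_2>0$), and if $\alpha_1+d_1=0$ the relations are inconsistent. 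Hence the face carries no equilibrium. If instead $B=0$, both equations collapse to $u+v=1$, so the entire arc $\{x_3=0,\ x_1^2+x_2^2=1\}$ consists of equilibria, which are therefore non-isolated. The faces $\{x_1=0\}$ and $\{x_2=0\}$ are handled by the cyclic symmetry of \eqref{sys0} and produce the governing factors $C$ and $A$ respectively; thus a whole circle of equilibria appears in a coordinate plane precisely when the corresponding one of $A,B,C$ vanishes, and no open-face equilibrium exists otherwise.

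For an interior equilibrium $p$ (all coordinates nonzero) I would first show $p\in\mathbb{S}^2$: since every $G_i(p)=0$, the Darboux identity \eqref{eq1} gives $F(p)K(p)=0$, and the cofactor $K(p)=-2(\alpha_1p_1^2+\alpha_2p_2^2+\alpha_3p_3^2)<0$ because $\alpha_i>0$ and $p\neq O$, forcing $F(p)=0$, i.e. $u+v+w=1$ with $u=p_1^2,\ v=p_2^2,\ w=p_3^2$. Feeding $u+v+w=1$ back into $G_1=G_2=G_3=0$ reduces them to the homogeneous relations $Bv=Aw$, $Cw=Bu$, $Au=Cv$. When $ABC\neq0$ these pin down the ratios $u:v:w=C:A:B$, and together with $u+v+w=1$ they give the unique point $u=C/\sigma_2,\ v=A/\sigma_2,\ w=B/\sigma_2$ with $\sigma_2=A+B+C$, which is exactly $Q^*$. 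This point has positive coordinates iff $A,B,C$ share a common sign, that is iff $AB>0$ and $BC>0$; in the complementary range $AB<0$ or $BC<0$ no interior equilibrium exists.

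Assembling the three cases yields the statement: if $ABC\neq0$ the only equilibria are the isolated points $O,e_1,e_2,e_3$ together with $Q^*$ exactly in case (i), giving five isolated equilibria, and only the four vertices in case (ii); if $ABC=0$ one of the coordinate circles found above is an equilibrium set, so the equilibria are not all isolated, which establishes the ``iff''. The main obstacle I anticipate is the bookkeeping in the face analysis: one must factor each $2\times2$ determinant correctly as a product of two linear forms and then treat the degenerate sub-case where that determinant vanishes while the governing factor ($A$, $B$, or $C$) does not, so that the sign argument excluding positive solutions still goes through. The cyclic symmetry of \eqref{sys0} is what keeps this from tripling the computation.
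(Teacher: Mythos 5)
Your proposal is correct, and it supplies more than the paper itself does: the paper states this lemma with only the phrase ``by straightforward computations'' and gives no argument, so your write-up is essentially the missing computation carried out in full. Your case analysis by vanishing coordinates is the natural route, and the algebra checks out: on the face $\{x_3=0\}$ the determinant $\alpha_1\alpha_2+d_1(\alpha_1+\alpha_2+d_1)$ does factor as $(\alpha_1+d_1)(\alpha_2+d_1)$, the relations $(\alpha_1+d_1)u=-\alpha_2$, $(\alpha_1+d_1)v=\alpha_1$ are valid linear consequences once $B\neq 0$ (so the sign/inconsistency argument excludes face equilibria, including the degenerate sub-case $\alpha_1+d_1=0$), and when $B=0$ both equations collapse to $u+v=1$, producing the non-isolated circle that settles the ``only if'' direction. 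Two of your choices genuinely streamline the work relative to brute force: using the Darboux identity with cofactor $K=-2(\alpha_1x_1^2+\alpha_2x_2^2+\alpha_3x_3^2)<0$ to force any interior equilibrium onto $\mathbb{S}^2$, which reduces the interior case to the homogeneous relations $Bv=Aw$, $Cw=Bu$, $Au=Cv$ with solution ray $u:v:w=C:A:B$ (matching $Q^*$ and $\sigma_2=A+B+C$); and invoking the cyclic symmetry $(x_1,x_2,x_3)\mapsto(x_2,x_3,x_1)$, $(\alpha_1,\alpha_2,\alpha_3)\mapsto(\alpha_2,\alpha_3,\alpha_1)$, $(d_1,d_2,d_3)\mapsto(d_3,d_1,d_2)$, under which $A\to B\to C\to A$, so the other two faces need no separate computation. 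One cosmetic point: when the signs of $A,B,C$ are mixed you write the candidate point as $u=C/\sigma_2$, etc., which is undefined if $\sigma_2=0$; this is harmless, since the ray $u:v:w=C:A:B$ then meets the plane $u+v+w=1$ in no positive point either way, but it is worth phrasing the exclusion directly in terms of the ray rather than the quotient.
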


To study the topological classification of these isolated equilibria, we compute the associated three eigenvalues as follows.
\begin{table}[H]
		\centering
		\caption{Possible isolated equilibria and the corresponding three eigenvalues }\label{SecType_eq1_atleast7}
		\setlength{\tabcolsep}{10pt}
		\begin{tabular}{ll}
			\toprule
			Equilibrium & three eigenvalues\\
			\midrule
			$O=(0,0,0)$ & $\alpha_1,\,\alpha_2,\,\alpha_3$\\
			$\textbf{e}_1=(1,0,0)$ & $-2\alpha_1,\,\alpha_2+d_1,\,-(\alpha_1+d_2)$\\
			$\textbf{e}_2=(0,1,0)$ & $\,-(\alpha_2+d_1),-2\alpha_2,\,\alpha_3+d_3$\\
			$\textbf{e}_3=(0,0,1)$ & $\,\alpha_1+d_2,\,-(\alpha_3+d_3),-2\alpha_3$\\
            $Q^*=(q_1^*, q_2^*, q_3^*)$ & $\lambda_{Q^*}i,  -\lambda_{Q^*}i, -\frac{2\sigma_1}{\sigma_2}$, here $\lambda_{Q^*}=2\sqrt{\frac{(\alpha_2+d_1)(\alpha_1+d_2)(\alpha_3+d_3)}{\sigma_2}}$\\

			\bottomrule
		\end{tabular}
	\end{table}

Now, we are ready to study the global dynamics when system \eqref{sys0} has only isolated equilibria in $\mathbb{R}^3_+$.
\begin{theorem}[Global dynamics]\label{global dynamics}
If $\alpha_i>0$, $i=1,2,3$ and $(\alpha_1+d_2)(\alpha_2+d_1)(\alpha_3+d_3)\neq0$, then system \eqref{sys0} has exactly two different topological classifications of  global dynamics in $\mathbb{R}^3_+$. More precisely,
\begin{itemize}
\item [(i)] if $\alpha_1+d_2, \alpha_2+d_1, \alpha_3+d_3$ have the same sign, then system \eqref{sys0} has five equiliria: $\{O, e_1, e_2, e_3, Q^*\}$. Moreover, $\mathbb{S}^2_+$ consists of periodic orbits, positive equilibria $Q^*$ and the heteroclinic polycycle $\partial\mathbb{S}^2_+$. And for any $x\in \mathbb{R}^3_+\setminus\{O\}$, $\omega(x)\subset \mathbb{S}^2_+$. The phase portrait is shown in Figure \ref{global dynamcis} (a).

 \item [(ii)] if at least one of $(\alpha_1+d_2)(\alpha_2+d_1)<0$ and $(\alpha_2+d_1)(\alpha_3+d_3)<0$ holds, then system \eqref{sys0} has four equiliria: $\{O, e_1, e_2, e_3\}$ and there exists unique an equilibrium $e_i\in \{e_1, e_2, e_3\}$ such that  for any $x\in \textrm{Int}\mathbb{R}^3_+$, $\omega(x)=\{e_i\}$. The phase portrait is shown in Figure \ref{global dynamcis} (b).
\end{itemize}
\end{theorem}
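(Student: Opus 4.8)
The plan is to collapse the three-dimensional problem onto the invariant sphere and then to uncover a Lotka--Volterra structure there. By Theorem~\ref{attractor} we already know $\omega(x)\subseteq\mathbb S^2_+$ for every $x\in\mathbb R^3_+\setminus\{O\}$, where $\mathbb S^2_+=\mathbb S^2\cap\mathbb R^3_+$ is a topological disk bounded by the three invariant coordinate arcs through $e_1,e_2,e_3$; hence it suffices to analyse the flow of \eqref{sys0} on $\mathbb S^2_+$. The decisive observation is that each $G_i$ in \eqref{sys0} depends on $(x_1,x_2,x_3)$ only through $x_1^2,x_2^2,x_3^2$, so the substitution $u_i=x_i^2$ carries \eqref{sys0}, after the global time rescaling $t\mapsto t/2$, into the Lotka--Volterra system $\dot u_i=2u_i(\alpha_i+(Mu)_i)$ on $\mathbb R^3_+$; it maps $\mathbb S^2_+$ onto the standard $2$-simplex $\Delta=\{u\ge 0:\ u_1+u_2+u_3=1\}$, the $e_i$ to the vertices of $\Delta$, and $Q^*$ to the interior equilibrium $u^*=((q_1^*)^2,(q_2^*)^2,(q_3^*)^2)$, and is a diffeomorphism between the interiors. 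Thus the flow on $\mathbb S^2_+$ is orbit-equivalent to a planar Lotka--Volterra flow on $\Delta$. Reading $M$ off \eqref{sys0}, a direct check of the entries gives the structural identity $M+M^\top=-(\alpha\mathbf 1^\top+\mathbf 1\alpha^\top)$ with $\alpha=(\alpha_1,\alpha_2,\alpha_3)^\top$, $\mathbf 1=(1,1,1)^\top$; consequently $w^\top(M+M^\top)w=0$ for every tangent vector $w$ to $\Delta$ (that is, $\mathbf 1^\top w=0$).

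In case (i) the quantities $\alpha_1+d_2,\alpha_2+d_1,\alpha_3+d_3$ share a common sign, $\sigma_2$ has that sign as well, and Lemma~\ref{existence of equilibria} gives $u^*\in\mathrm{int}\,\Delta$. Writing the field as $\dot u_i=2u_i\sum_jM_{ij}(u_j-u_j^*)$ and setting $H(u)=\sum_i(u_i-u_i^*\ln u_i)$, the identity above yields
\[
\dot H=2\,(u-u^*)^\top M\,(u-u^*)=(u-u^*)^\top(M+M^\top)(u-u^*)=0\qquad\text{on }\Delta,
\]
so $H$ is a first integral. Since $H$ is strictly convex, proper on $\mathrm{int}\,\Delta$ (it blows up on $\partial\Delta$ because $u_i^*>0$), and has its unique minimum at $u^*$, every level set $\{H=c\}$ with $c>H(u^*)$ is a closed convex curve around $u^*$, hence a periodic orbit. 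These orbits foliate $\mathrm{int}\,\Delta$ and upgrade the linear center $u^*$ (the purely imaginary pair $\pm\lambda_{Q^*}i$ in Table~\ref{SecType_eq1_atleast7}) to a genuine nonlinear center, while $\partial\Delta$ is a heteroclinic polycycle through the three vertices. Pulling back by $u=x^2$ gives statement (i).

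In case (ii) the three signs are not all equal, so $u^*\notin\Delta$, and inspecting the tangential eigenvalues in Table~\ref{SecType_eq1_atleast7} over the six admissible sign patterns shows that exactly one vertex is a stable node and the other two are a saddle and a source. Since $\Delta$ contains no interior equilibrium, the Poincar\'e index theorem rules out any periodic orbit in $\mathrm{int}\,\Delta$, as such an orbit would necessarily enclose a fixed point. The Poincar\'e--Bendixson theorem on the disk then forces every $\omega$-limit set onto $\partial\Delta$; the arc orientations are not cyclic, so $\partial\Delta$ carries no attracting heteroclinic cycle, and a Butler--McGehee argument excludes the source and the saddle from any $\omega$-limit set. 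Hence every interior trajectory converges to the unique sink $e_i$, which after transport is statement (ii).

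The step I expect to be the main obstacle is the boundary analysis in case (ii): excluding a partial heteroclinic chain through the saddle from the $\omega$-limit set needs the Butler--McGehee lemma together with the exact stable/unstable directions at each vertex. By contrast, case (i) is essentially finished once the Lotka--Volterra reduction and the first integral $H$ are identified, the only real insight being the substitution $u_i=x_i^2$ and the rank-two identity for $M+M^\top$.
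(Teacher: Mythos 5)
Your proof is correct, and it takes a genuinely different route from the paper's. For part (i), the paper restricts the flow to $\mathbb{S}^2_+$ by eliminating $x_3^2=1-x_1^2-x_2^2$, exhibits the Darboux first integral $H(x_1,x_2)=x_1^{2(\alpha_3+d_3)}x_2^{2(\alpha_1+d_2)}(x_1^2+x_2^2-1)^{\alpha_2+d_1}$ for the planar system \eqref{reduced}, and concludes that $Q^*$ is a center via the Poincar\'e center theorem; you instead pass to Lotka--Volterra coordinates $u_i=x_i^2$ on the simplex $\Delta$ and use the relative-entropy integral $\sum_i(u_i-u_i^*\ln u_i)$, whose constancy rests on your identity $M+M^\top=-(\alpha\mathbf 1^\top+\mathbf 1\alpha^\top)$, which I checked entrywise and is correct. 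The two integrals are secretly the same object: in the coordinates $u_i=x_i^2$, your $H$ equals $1-\sigma_2^{-1}\log$ of the absolute value of the paper's integral, since $u^*=(\alpha_3+d_3,\alpha_1+d_2,\alpha_2+d_1)/\sigma_2$. What your version buys is that strict convexity and properness of $H$ on $\mathrm{int}\,\Delta$ give the global foliation by closed orbits in one stroke, whereas the paper's center-theorem argument is local and its globalization (``taking into account Theorem \ref{attractor}'') is left implicit. For part (ii), the paper's proof is essentially an eigenvalue table plus the assertion that $\omega(x)=\{e_i\}$; your argument (index theorem to exclude interior periodic orbits, non-cyclic edge orientations to exclude a boundary polycycle, Butler--McGehee to exclude the saddle and source) supplies exactly the dynamics that assertion hides, and is the more complete of the two. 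The one point you leave informal --- and which the paper glosses over as well --- is the lift from the disk flow to genuinely three-dimensional orbits: for $x\in\mathrm{Int}\,\mathbb{R}^3_+$ off the sphere, $\omega(x)$ is an invariant, connected, internally chain-transitive subset of $\mathbb{S}^2_+$, not the $\omega$-limit set of an on-sphere trajectory, so one should either run Butler--McGehee for the 3D flow directly (using that the 3D stable manifold of the on-sphere saddle lies in an invariant coordinate plane and that of the on-sphere source is a coordinate axis, both meeting $\mathbb{S}^2_+$ only in $\partial\mathbb{S}^2_+$) or invoke a chain-transitivity result; either patch is routine and does not affect the soundness of your approach.
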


 \begin{figure}[H]
 \begin{subfigure}{0.5\textwidth}
\includegraphics[width=1.05\linewidth]{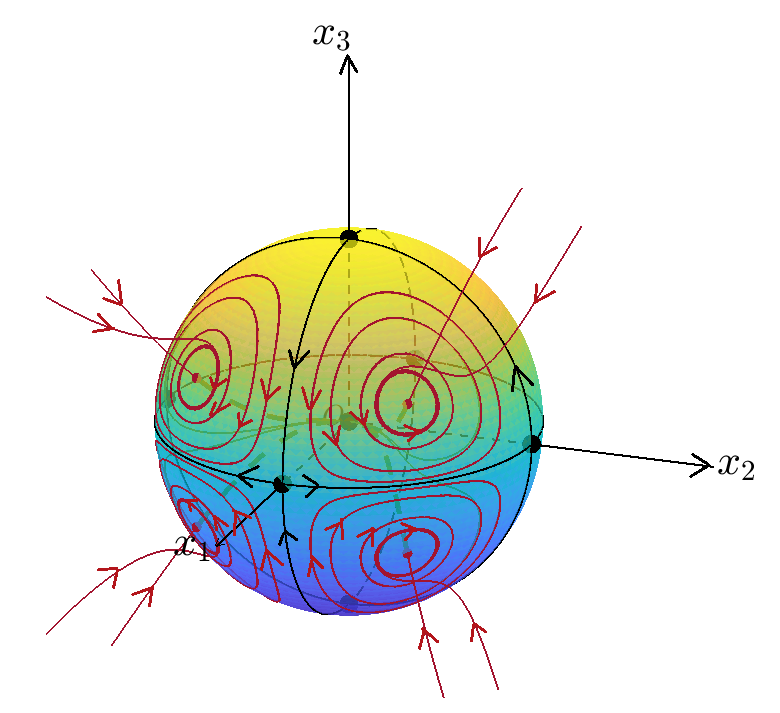}
\caption{ {\footnotesize$\alpha_1+d_2>0, \alpha_2+d_1>0, \alpha_3+d_3>0$}}
  \end{subfigure}
 \begin{subfigure}{0.5\textwidth}
\includegraphics[width=1\linewidth]{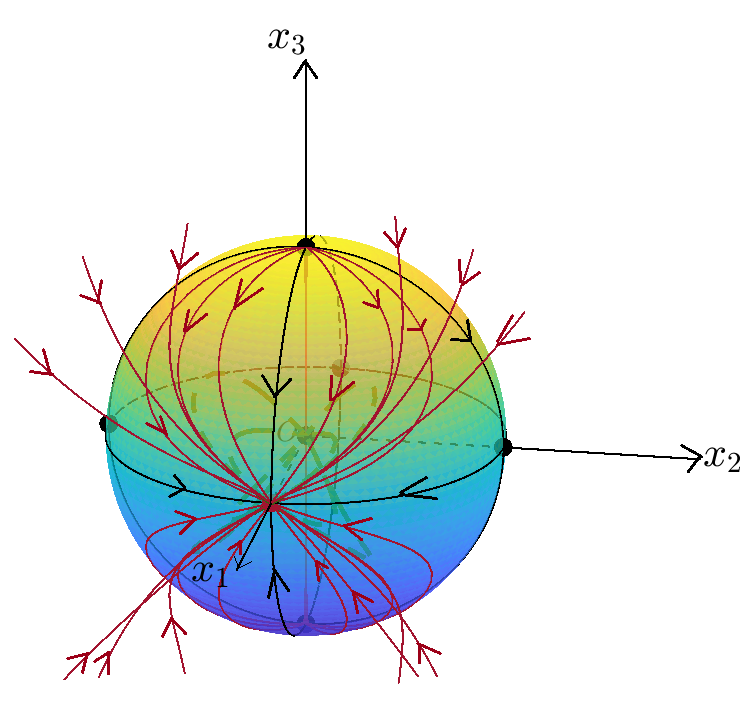}
\caption{{\footnotesize$\alpha_1+d_2>0, \alpha_2+d_1<0, \alpha_3+d_3<0$}}
  \end{subfigure}
\caption{The global dynamics of system \eqref{sys0} with isolated equilibria}
\label{global dynamcis}
\end{figure}
\begin{proof}
(i) Using Lemma \ref{existence of equilibria} it remains to prove that $Q^*$ is a center on $\mathbb{S}^2_+$.  For this, let us consider system \eqref{sys0} restricted on $\mathbb{S}^2_+$, that is,
\begin{equation}\label{reduced}
\begin{cases}
    \dot{x_1}=x_1(-(\alpha_1+d_2)x_1^2-(\alpha_1+\alpha_2+d_1+d_2)x_2^2+(\alpha_1+d_2)),\\
    \dot{x_2}=x_2((\alpha_2+\alpha_3+d_1+d_3)x_1^2+(\alpha_3+d_3)x_2^2-(\alpha_3+d_3)).
\end{cases}
\end{equation}
One can check that
$$H(x_1, x_2)=x_1^{2(\alpha_3+d_3)}x_2^{2(\alpha_1+d_2)}(x_1^2+x_2^2-1)^{\alpha_2+d_1}$$
is a first integral of system \eqref{reduced}. And so, $Q^*$ is a center on $\mathbb{S}^2_+$ by Poincar\'e center Theorem. Taking into account Theorem \ref{attractor} we derive this statement.

(ii) From Lemma \ref{existence of equilibria}, system \eqref{sys0} has four isolated equilibria $O, e_1, e_2, e_3$ in $\mathbb{R}^3_+$ and only one of $e_1, e_2, e_3$
is local asymptotic stable by computation of eigenvalues. Note that $e_1, e_2, e_3$ are on the compact invariant attractive manifold $\mathbb{S}^2_+$ by Theorem \ref{attractor}. So any $x\in \textrm{Int}\mathbb{R}^3_+$, $\omega(x)=\{e_i\}$.
\end{proof}

\section{System \eqref{sys0} driven by linear multiplicative noise}\label{stochastic 3d}

In this Section, we consider the stochastic dynamics of system \eqref{sys0} under the perturbation of linear multiplicative Wiener noise, that is the following system:
\begin{equation}\label{sys5}
\begin{cases}
dx_1(t)=x_1(\alpha_1-\alpha_1 x_1^2-(\alpha_1+\alpha_2+d_1)x_2^2+d_2x_3^2)dt+\sigma x_1dW_t,\\
dx_2(t)=x_2(\alpha_2+d_1x_1^2-\alpha_2 x_2^2-(\alpha_2+\alpha_3+d_3)x_3^2)dt+\sigma x_2dW_t,\\
dx_3(t)=x_3(\alpha_3-(\alpha_3+\alpha_1+d_2)x_1^2+d_3x_2^2-\alpha_3 x_3^2)dt+\sigma x_3dW_t,
\end{cases}
\end{equation}
where $(x_1, x_2, x_3)\in \mathbb{R}^3$, $\sigma>0$ represents the strength of noise,
$(W_t)$ is the Wiener process, $\alpha_i>0$, $d_i\in \mathbb{R}$, $i=1,2,3$.

For convenience, we first give some useful notations. Let $b(x)$ be the drift term of system \eqref{sys5} and $(a^{ij})$ the diffusion matrix, i.e., $a_{ii}=\sigma^2x_i^2$, $i=1,2,3$ and $a_{ij}=0$ if $i\neq j$.
We rewrite the drift term of system \eqref{sys5} into the following form:
$$dx_i=x_i(\alpha_i+\sum_{j=1}^3b_{ij}x^2_{j})dt.$$

We first show the existence of global solutions of stochastic system \eqref{sys5}.
\begin{theorem}[Existence of global solutions]\label{thm10}
For any $x\in \mathbb{R}^3$ and almost surely $\omega\in \Omega$, there exists a global unique solution $\Phi(\cdot, \omega, x)$ to \eqref{sys5} with initial data $x$.
\end{theorem}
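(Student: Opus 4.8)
The plan is to prove global existence and uniqueness for the stochastic Kolmogorov system \eqref{sys5} in two stages: first establish local existence and uniqueness, then rule out finite-time blow-up via a Lyapunov argument. The drift coefficient $b(x)$ is a cubic polynomial and the diffusion coefficient $\sigma x_i$ is linear, so both are locally Lipschitz and $C^1$ on $\mathbb{R}^3$. By the standard existence-uniqueness theorem for stochastic differential equations with locally Lipschitz coefficients, for each initial datum $x$ there exists a unique local solution $\Phi(\cdot,\omega,x)$ defined up to an explosion time $\tau_\infty=\lim_{n\to\infty}\tau_n$, where $\tau_n=\inf\{t\geq 0:\|\Phi(t)\|\geq n\}$. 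It therefore suffices to show $\tau_\infty=+\infty$ almost surely.

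The key idea is to exploit the Doss--Sussmann transform, which the paper has already advertised as its main tool: the linear multiplicative noise $\sigma x_i\,dW_t$ acts identically on all three coordinates, so setting $y_i(t)=e^{-\sigma W_t+\frac12\sigma^2 t}x_i(t)$ (equivalently conjugating by the common scalar factor $e^{\sigma W_t-\frac12\sigma^2 t}$) should turn \eqref{sys5} into a random ODE for $(y_1,y_2,y_3)$ with no stochastic integral term. Writing $\xi(t)=e^{\sigma W_t-\frac12\sigma^2 t}$, an application of It\^o's formula to each $x_i=\xi\,y_i$ converts the system into
\begin{equation}\label{eq:transformed}
\frac{dy_i}{dt}=y_i\Big(\alpha_i+\xi(t)^2\sum_{j=1}^3 b_{ij}y_j^2\Big),\quad i=1,2,3.
\end{equation}
For each fixed $\omega$ this is a classical ODE with continuous (locally Lipschitz) right-hand side, and solutions of \eqref{sys5} and \eqref{eq:transformed} have the same explosion time since $\xi(t)$ is strictly positive and finite for all $t$.

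Next I would establish a priori control on $\|y(t)\|$ using the Lyapunov structure inherited from the deterministic system. Reusing $L(x)=x_1^2+x_2^2+x_3^2-1$ or working directly with $R(t)=y_1^2+y_2^2+y_3^2$, one differentiates along \eqref{eq:transformed} and uses the algebraic identity $\sum_i 2y_i^2(\alpha_i+\xi^2\sum_j b_{ij}y_j^2)$; the crucial point, exactly as in \eqref{Leq}, is that the cubic cross-terms collapse because the coefficients $b_{ij}$ come from the invariant-sphere structure of Proposition \ref{thm1}, yielding $\tfrac{d}{dt}R = 2\sum_i\alpha_i y_i^2 - 2\xi^2(\alpha_1 y_1^4+\alpha_2 y_2^4+\alpha_3 y_3^4)\cdot(\text{something})$ with the highest-degree term having a sign controlled by $\alpha_i>0$. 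I expect to obtain a differential inequality of the form $\tfrac{d}{dt}R\leq C\,R$ (or $\leq C R - c\,\xi^2 R^2$) on any finite time interval, where the constants depend on the continuous, hence locally bounded, path $\xi(\cdot)$. Gronwall's inequality then bounds $R(t)$ on $[0,T]$ for every $T$, so $y$ cannot explode in finite time, and transforming back gives $\tau_\infty=+\infty$ almost surely.

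The main obstacle I anticipate is verifying the precise sign of the leading quartic term in $\tfrac{d}{dt}R$: unlike the tidy cancellation in \eqref{Leq} where the damping constant was simply $-2\alpha_3\|\Psi\|^2$, the off-diagonal entries $b_{ij}$ here need not make the quartic form negative definite, so one must argue carefully that the destabilizing cubic/quartic contributions are dominated. The clean resolution is that the Doss--Sussmann factor $\xi(t)^2>0$ multiplies precisely the same homogeneous cubic form that appears in the deterministic Lyapunov computation, whose total contribution to $\tfrac{d}{dt}R$ is $-2\xi^2 R\,(\alpha_1 y_1^2+\alpha_2 y_2^2+\alpha_3 y_3^2)\leq 0$; this keeps the nonlinear feedback dissipative and reduces the estimate to the linear growth term $2\sum_i\alpha_i y_i^2\leq 2(\max_i\alpha_i)R$, after which Gronwall closes the argument on each finite horizon.
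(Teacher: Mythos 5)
Your proof is correct, but it follows a genuinely different route from the paper's. The paper never transforms the equation: it applies the It\^o generator $\mathscr{L}$ of \eqref{sys5} to the same Lyapunov function $V(x)=x_1^2+x_2^2+x_3^2$, uses the invariant-sphere identity $\langle x,b(x)\rangle=-(\alpha_1x_1^2+\alpha_2x_2^2+\alpha_3x_3^2)(\|x\|^2-1)$ to obtain
\begin{equation*}
\mathscr{L}V(x)\le\bigl(2\max\{\alpha_1,\alpha_2,\alpha_3\}+\sigma^2\bigr)V(x),
\end{equation*}
and then invokes Khasminskii's nonexplosion criterion (Theorem 3.3.5 in \cite{krz}) to get global existence and uniqueness in one stroke. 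You instead strip the noise off pathwise via the Doss--Sussmann transform and close with classical ODE theory plus Gronwall. The algebraic core is identical in both arguments: your final observation that the quartic terms contribute $-2\xi^2 R\,(\alpha_1y_1^2+\alpha_2y_2^2+\alpha_3y_3^2)\le 0$ is exactly the identity $\sum_{i,j}b_{ij}y_i^2y_j^2=-(\alpha_1y_1^2+\alpha_2y_2^2+\alpha_3y_3^2)(y_1^2+y_2^2+y_3^2)$ forced by Proposition \ref{thm1}, which is also what makes the paper's computation of $\mathscr{L}V$ close; so the concern you raise in your middle paragraph about the sign of the quartic form is resolved correctly by your last paragraph. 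As for what each approach buys: the paper's generator argument is shorter and does not depend on the noise having the special isotropic scalar form $\sigma x_i\,dW_t$ (it would survive, for instance, different intensities $\sigma_i$ in different coordinates), but it outsources the nonexplosion step to a cited theorem; your pathwise argument is elementary and self-contained, at the price of relying crucially on the fact that a single common exponential factor $\xi(t)$ removes the noise from all three equations simultaneously. It is worth noting that your transform is essentially the one the paper itself deploys later in the proof of Theorem \ref{stochastic dynamics}, so your proof of Theorem \ref{thm10} anticipates the key technique used there.
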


\begin{proof}
Define the Lyapunov function $V: \mathbb{R}^3\rightarrow R_+$ by
$$ V(x):=x_1^2+x_2^2+x_3^2,$$
and the operator $\mathscr{L}$ by
\begin{equation}\label{Fop}
\mathscr{L}f(x):= \langle \nabla f(x), b(x) \rangle + \frac12 a^{ij}\partial^2_{ij}f(x),\ \  f\in C^2(\mathbb{R}^3),
\end{equation}
where $a^{ij}$ is the diffusion matrix of \eqref{sys5}.
Then, by some computations,
\begin{align*}
\mathscr{L} V(x) & = 2\langle x, b(x)\rangle+ \sigma^2 \sum_{i=1}^3x_i^2\\
& = -2(\alpha_1 x_1^2+\alpha_2 x_2^2+\alpha_3 x_3^2)(x_1^2+x_2^2+x_3^2-1)+\sigma^2 \sum_{i=1}^3x_i^2 \\
&\leq -2\min\{\alpha_1, \alpha_2, \alpha_3\}\|x\|^4+(2\max\{\alpha_1, \alpha_2, \alpha_3\}+\sigma^2)\|x\|^2 \\
&= V(x)(-2\min\{\alpha_1, \alpha_2, \alpha_3\}\|x\|^2+2\max\{\alpha_1, \alpha_2, \alpha_3\}+\sigma^2).
\end{align*}
Therefore, we get
$$\mathscr{L} V(x)\leq (2\max\{\alpha_1, \alpha_2, \alpha_3\}+\sigma^2)V(x).$$
Using Theorem 3.3.5 in \cite{krz} we derive the global existence and uniqueness of the solution to \eqref{sys5}.
\end{proof}

Now we state our main result as follows.
\begin{theorem}[Stochastic dynamics]\label{stochastic dynamics}
Let $\alpha_i>0, i=1,2,3$ and assume that $d_1\le 0, d_3\le0$ and $\alpha_1+\alpha_3+d_2\ge 0$. Then, there exists a threshold $\sigma_0=\sqrt{2\max\{\alpha_1, \alpha_2, \alpha_3\}}$ such that when $\sigma>\sigma_0$, the noise destroys the attracting invariant sphere $\mathbb{S}^2$. And the change of noise intensity leads to transitions of stationary measures. More precisely,
\begin{itemize}
 \item [(i)] if $\sqrt{2\max\{\alpha_1, \alpha_2, \alpha_3\}}<\sigma$, then for any $x\in \mathbb{R}^3$, $\Phi(t, \omega, x)\to O$ as $t\to \infty$ for almost surely $\omega\in \Omega$. And $\delta_O$ is the unique stationary measure of system \eqref{sys5}.
 \item [(ii)] if $\sqrt{2\min\{\alpha_1, \alpha_2, \alpha_3\}}< \sigma<\sqrt{2\max\{\alpha_1, \alpha_2, \alpha_3\}}$, then system \eqref{sys5} has at least two stationary measures: one is $\delta_O$ and the other is supported on a ray.
 \item[(iii)] if $0<\sigma<\sqrt{2\min\{\alpha_1, \alpha_2, \alpha_3\}}$, then \eqref{sys5} has at least four stationary measures: one is $\delta_O$ and the others are supported on rays.
\end{itemize}
\end{theorem}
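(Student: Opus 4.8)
The plan is to use the Doss--Sussmann transform to strip the noise out of \eqref{sys5} and then reuse the deterministic Lyapunov structure from Section \ref{3d determinisitc}. Since the noise is linear, multiplicative and has a common intensity in all three components, I would set $u_i(t)=x_i(t)e^{-\sigma W_t}$; It\^o's formula then removes the stochastic differential entirely and produces, for each fixed Brownian path, the random ODE
\begin{equation*}
\frac{du_i}{dt}=u_i\Big(\widetilde{\alpha}_i-e^{2\sigma W_t}P_i(u)\Big),\qquad \widetilde{\alpha}_i:=\alpha_i-\tfrac12\sigma^2,
\end{equation*}
where $P_i(u)$ is the quadratic form obtained from the cubic drift after substituting $x_j^2=u_j^2e^{2\sigma W_t}$. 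The decisive observation is the same cancellation of cross terms that produced \eqref{Leq}: a direct computation gives $\sum_{i=1}^3 u_i^2 P_i(u)=\|u\|^2\big(\alpha_1u_1^2+\alpha_2u_2^2+\alpha_3u_3^2\big)$, so that
\begin{equation*}
\frac{d\|u\|^2}{dt}=2\sum_{i=1}^3\widetilde{\alpha}_i u_i^2-2e^{2\sigma W_t}\big(\alpha_1u_1^2+\alpha_2u_2^2+\alpha_3u_3^2\big)\|u\|^2 .
\end{equation*}
As in the deterministic case, the parameters $d_i$ disappear from this identity.

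For part (i) I would proceed as follows. When $\sigma>\sigma_0=\sqrt{2\max_i\alpha_i}$ every $\widetilde{\alpha}_i<0$, and since $\alpha_i>0$ the second term above is nonpositive, so $\tfrac{d}{dt}\|u\|^2\le 2\widetilde{\alpha}_{\max}\|u\|^2$ with $\widetilde{\alpha}_{\max}=\max_i\alpha_i-\tfrac12\sigma^2<0$. Gr\"onwall forces $\|u(t)\|^2\le\|u(0)\|^2e^{2\widetilde{\alpha}_{\max}t}\to0$. Transforming back, $\|x(t)\|^2=e^{2\sigma W_t}\|u(t)\|^2\le\|x(0)\|^2\exp\{2\sigma W_t+(2\max_i\alpha_i-\sigma^2)t\}$, and because $W_t/t\to0$ almost surely the exponent tends to $-\infty$, whence $\Phi(t,\omega,x)\to O$ for a.e.\ $\omega$. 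Uniqueness of $\delta_O$ then follows from stationarity: for any stationary $\mu$ and bounded continuous $f$, $\int f\,d\mu=\int\mathbb{E}[f(\Phi(t,\cdot,x))]\,d\mu(x)\to f(O)$ by dominated convergence, so $\mu=\delta_O$.

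For parts (ii) and (iii) the idea is to build nontrivial stationary measures on the invariant coordinate axes. Each half-axis $\{x_i>0,\ x_j=x_k=0\}$ is invariant for \eqref{sys5}, and there the system collapses to the one-dimensional SDE $dx=\alpha_i x(1-x^2)\,dt+\sigma x\,dW_t$. The boundary $0$ is inaccessible, and the scale/speed computation yields a stationary density on $(0,\infty)$ proportional to $x^{2\alpha_i/\sigma^2-2}\exp\{-\alpha_i x^2/\sigma^2\}$; the Gaussian tail gives integrability at $+\infty$, while integrability at $0$ holds precisely when $2\alpha_i/\sigma^2>1$, i.e.\ $\sigma<\sqrt{2\alpha_i}$. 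By invariance of the axis, such a one-dimensional law lifts directly to a stationary measure of the full three-dimensional process supported on that ray. Counting then gives the result: if $\sigma<\sqrt{2\min_i\alpha_i}$ all three axes carry such a measure, so together with $\delta_O$ there are at least four; if $\sqrt{2\min_i\alpha_i}<\sigma<\sqrt{2\max_i\alpha_i}$ the axis with the largest $\alpha_i$ still carries one, giving at least two. The sign hypotheses $d_1\le0$, $d_3\le0$, $\alpha_1+\alpha_3+d_2\ge0$ enter here, to control the transverse drift and thereby secure the recurrence/tightness underlying the construction.

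The main obstacle I anticipate is twofold. First, the back-transformation in part (i) is delicate: decay of $\|u\|^2$ by itself is insufficient, and one must combine it with the almost-sure asymptotics $W_t/t\to0$ to defeat the amplifying factor $e^{2\sigma W_t}$ — and $\sigma_0$ is exactly the value at which $2\max_i\alpha_i-\sigma^2$ changes sign. Second, for (ii)--(iii) the subtle step is the boundary analysis of the one-dimensional SDE at $x=0$, namely establishing both inaccessibility and the sharp integrability criterion $\sigma<\sqrt{2\alpha_i}$ that separates a genuine ray-supported stationary measure from collapse onto $\delta_O$. Pinning down precisely where the hypotheses $d_1\le0,\,d_3\le0,\,\alpha_1+\alpha_3+d_2\ge0$ are genuinely used — as opposed to the axis measures, whose existence already follows from $\alpha_i>0$ — is the point that will require the most care.
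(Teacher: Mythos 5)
Your proof is correct, but it takes a genuinely different route from the paper in the key step, part (i). The paper's transform is coordinate-dependent: $y_i=e^{-(\alpha_i-\frac12\sigma^2)t-\sigma W_t}x_i$, which removes the linear drift entirely but replaces the quadratic form by one weighted with the unequal random factors $m_j=\exp\{2(\alpha_j-\frac12\sigma^2)t+2\sigma W_t\}$; the deterministic cancellation of cross terms then fails, and the hypotheses $d_1\le0$, $d_3\le0$, $\alpha_1+\alpha_3+d_2\ge0$ (together with the ordering $m_1\ge m_2\ge m_3$) are precisely what the paper uses to bound the cross terms $I_1,I_2,I_3$ and obtain $\frac{d}{dt}V(y)\le-2m_3\alpha_3V^2$. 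Your transform $u_i=x_ie^{-\sigma W_t}$ is coordinate-independent, so the common factor $e^{2\sigma W_t}$ pulls out of the whole quadratic form and the deterministic identity $\sum_{i,j}b_{ij}u_i^2u_j^2=-\|u\|^2\sum_i\alpha_iu_i^2$ applies verbatim (as one checks from $b_{12}+b_{21}=-(\alpha_1+\alpha_2)$, $b_{13}+b_{31}=-(\alpha_1+\alpha_3)$, $b_{23}+b_{32}=-(\alpha_2+\alpha_3)$); consequently your part (i) never uses the hypotheses on $d_1,d_2,d_3$, which is a real simplification. What the paper's heavier transform buys in exchange is a stronger, per-coordinate claim --- $x_i(t)\to0$ a.s.\ whenever $\alpha_i<\frac12\sigma^2$, even if the other coordinates do not decay --- which your estimate $\frac{d}{dt}\|u\|^2\le2\bigl(\max_i\alpha_i-\frac12\sigma^2\bigr)\|u\|^2$ cannot reproduce when $\max_i\alpha_i>\frac12\sigma^2$; but that stronger claim is not needed for the theorem as stated. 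Parts (ii) and (iii) are essentially identical in the two proofs (invariant rays plus one-dimensional stationary measures); you recompute the speed density $x^{2\alpha_i/\sigma^2-2}e^{-\alpha_ix^2/\sigma^2}$ and its integrability threshold $\sigma<\sqrt{2\alpha_i}$ explicitly, while the paper cites Lemmas 3.4--3.5 of \cite{zhou2024stochastic} for the same facts. One correction to your closing remark: the sign hypotheses play no role in (ii)--(iii); the ray-supported measures need no transverse recurrence or tightness, since the rays are invariant (by pathwise uniqueness, Theorem \ref{thm10}) and a stationary measure of the restricted one-dimensional diffusion lifts directly to a stationary measure of the full system. In the paper those hypotheses enter only through the Lyapunov estimates for its transformed system, and in your approach they are never needed at all.
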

{\bf The proof of Theorem \ref{stochastic dynamics}:}
We first claim that: if there exist  $i\in \{1,2,3\}$ such that $\alpha_i<\frac12 \sigma^2$, then for any $x\in \mathbb{R}^3$, $x_i(t, \omega, x)\to 0$ as $t\to \infty$ for almost surely $\omega\in \Omega$. For this purpose,
let $y=(y_1, y_2, y_3)$ and
$y_i(t,\omega,x_0):=e^{-(\alpha_i-\frac12 \sigma^2)t-\sigma W_t}x_i(t,\omega,x_0)$,
$t\geq 0$, $\omega\in \Omega$, $x_0\in \mathbb{R}^3$.
Then \eqref{sys5} becomes
\begin{equation}\label{change2}
dy_i=y_i(\sum_j m_jb_{ij} y_j^2)dt,
\ \ i=1,2,3,
\end{equation}
where $m_j=\exp\{2(\alpha_j-\frac12\sigma^2)t+2\sigma W_t(\omega)\}$.
Define the Lyapunov function $V: \mathbb{R}^3\to \mathbb{R}_+$ by
\begin{equation}
V(y):=\sum_{i=1}^3y_i^2,\ \  (y_1, y_2, y_3)\in \mathbb{R}^3.
\end{equation}
Then, along the trajectory of \eqref{change2} with initial data $x_0\neq O$ we compute
\begin{align}\label{Lyapunov_1}
    \frac{dV(y(t))}{dt}&=2\sum_iy_i^2(\sum_jb_{ij}m_jy_j^2)\\
    &= 2I_0+2I_1+2I_2+2I_3,
\end{align}
where
\begin{equation}
\begin{cases}
I_0=-\sum_{i=1}^3\alpha_im_i y_i^4, \\
I_1=(d_1m_1-(\alpha_1+\alpha_2+d_1)m_2)y_1^2y_2^2,\\
I_2=(d_2m_3-(\alpha_1+\alpha_3+d_2)m_1)y_1^2y_3^2, \\
I_3=(d_3m_2-(\alpha_2+\alpha_3+d_3)m_3)y_2^2y_3^2.
\end{cases}
\end{equation}
Without loss of generality, we assume that $\alpha_1\ge\alpha_2\ge\alpha_3>0$, and so, $m_1\ge m_2\ge m_3>0$ for any $t\ge0$ and $\omega\in \Omega$. Therefore, we have
\begin{equation}\label{I_0}
I_0\le-m_3(\alpha_1y_1^4+\alpha_2y_2^4+\alpha_3y_3^4).
\end{equation}
Now we estimate $I_i$, $i=1,2,3$.

Since $d_1\le0$ and $\alpha_1+\alpha_2>0$, we have $d_1m_1\le d_1m_2$ and $-(\alpha_1+\alpha_2)m_2\le -(\alpha_1+\alpha_2)m_3$. Thus, \begin{align}\label{I_1}
I_1&\le (d_1m_2-(\alpha_1+\alpha_2+d_1)m_2)y_1^2y_2^2, \\ \notag
&=-(\alpha_1+\alpha_2)m_2y_1^2y_2^2, \\ \notag
&\le -(\alpha_1+\alpha_2)m_3y_1^2y_2^2.
\end{align}

Note that $\alpha_1+\alpha_3+d_2\ge 0$, we derive
\begin{align}\label{I_2}
I_2&\le (d_2m_3-(\alpha_1+\alpha_3+d_2)m_3)y_1^2y_3^2, \\ \notag
&=-(\alpha_1+\alpha_3)m_3y_1^2y_3^2.
\end{align}

Since $d_3\le0$, we obtain
\begin{align}\label{I_3}
I_3&\le (d_3m_3-(\alpha_2+\alpha_3+d_3)m_3)y_2^2y_3^2, \\ \notag
&=-(\alpha_2+\alpha_3)m_3y_2^2y_3^2.
\end{align}
Thus, combined with estimations \eqref{I_0}-\eqref{I_3}, one derive
\begin{align*}
    \frac{dV(y(t))}{dt}&\le -2m_3(\sum_{i=1}^3\alpha_iy_i^4+(\alpha_1+\alpha_2)y_1^2y_2^2+(\alpha_1+\alpha_3)y_1^2y_3^2+(\alpha_2+\alpha_3)y_2^2y_3^2)\\
    &= -2m_3(y_1^2+y_2^2+y_3^2)(\alpha_1y_1^2+\alpha_2y_2^2+\alpha_3y_3^2)\\
    &\le -2m_3\alpha_3V^2<0.
\end{align*}
This yields that
\begin{equation}\label{esss}
\sup\limits_{t\geq 0}\|y(t, \omega, x_0)\|< \infty, \ \  \mathbb{P}-a.s.
\end{equation}
Note that
\begin{equation}\label{xxx}
x_i(t,\omega,x_0)=e^{(\alpha_i-\frac12\sigma^2)t+\sigma W_t}y_i(t, \omega,x_0).
\end{equation}
Therefore, if there exists $i\in\{1,2,3\}$ such that $\alpha_i<\frac12\sigma^2$, then,  by \eqref{xxx}, we obtain $\mathbb{P}$-a.s.
 $$x_i(t, \omega, x_0)\rightarrow 0 \ \  as\,\, t\rightarrow \infty,$$
which completes the claim.

(i) Since $\max\{\alpha_1, \alpha_2,\alpha_3\}<\frac12\sigma^2$, taking into account \eqref{xxx} one immediately has $\mathbb{P}$-a.s.
 $$x(t, \omega, x_0)\rightarrow O \ \  as\,\, t\rightarrow \infty,$$
which yields that the noise destroys the attracting invariant sphere $\mathbb{S}^2.$

It remains to prove that $\delta_O$ is the unique stationary measure of system \eqref{sys5} in $\mathbb{R}^3$. For this purpose, first note that $O$ is a random equilibrium, and so $\delta_O$ is an ergodic stationary measure. Taking into account that for any $x\in \mathbb{R}^3$, $\Phi(t, \omega,x)\to O$ almost surely and using Lebesgue-dominated convergence theorem, for any $f\in C_b(\mathbb{R}^3)$, we derive
\begin{align*}
\displaystyle\lim\limits_{t\rightarrow \infty}\int_{\mathbb{R}^3}f(z)P(t,x, dz)
= \displaystyle\lim\limits_{t\rightarrow \infty}\int_{\Omega}f(\Phi(t, \omega, x))\mathbb{P}(d\omega)
=\int_{\mathbb{R}^3}f(z)\delta_O (dz),
\end{align*}
which implies that
\begin{equation}\label{es 1}
\displaystyle\lim\limits_{t\rightarrow \infty} P(t, x, \cdot)\rightarrow \delta_O  \ \  \textrm{weakly} \,\, \textrm{in} \,\, \mathcal{P}(\mathbb{R}^3).
\end{equation}

Now we are ready to prove the uniqueness of $\delta_O$. For this, we use the same analysis as in the proof of Theorem 1.1 in \cite{zhou2024stochastic}. Assume that $\nu\in \mathcal{P}(\mathbb{R}^3)$ is another ergodic stationary measure such that $\nu(\cdot)\ne \delta_O(\cdot)$.
Then, taking into account \eqref{es 1} we derive
\begin{equation}\label{es_3}
\int_{\mathbb{R}^3}P(t, x, \cdot)\nu(dx)\stackrel{w}{\rightharpoonup}\delta_O(\cdot), \ \ as\ t \rightarrow \infty.
\end{equation}
However, using the definition of stationary measures, for any $t\ge 0$, one has $$\int_{\mathbb{R}^3}P(t, x, \cdot)\nu(dx)=\nu(\cdot),$$ which violates \eqref{es_3}.

(ii) Without loss of generality, we assume that $\alpha_1\ge\alpha_2\ge\alpha_3>0$, and so $\sqrt{2\alpha_3}<\sigma<\sqrt{2\alpha_1}$. Note that $H_1:=\{(x_1, x_2, x_3): x_2=x_3=0, x_1>0\}$ is invariant under system \eqref{sys5}. So, we consider the restriction of  system \eqref{sys5} on $H_1$, that is,
\begin{equation}\label{reduced-system}
\frac{dx_1}{dt}=x_1(\alpha_1-\alpha_1x_1^2)dt+\sigma x_1 dW_t.
\end{equation}
Since $\sqrt{2\alpha_3}<\sigma<\sqrt{2\alpha_1}$, applying Lemma 3.4 and 3.5 in \cite{zhou2024stochastic} system \eqref{reduced-system} has two stationary measures: $\delta_O$ and $\mu_1$ which is supported on $H_1$. This yields that system \eqref{sys5} has at least two stationary measures.

(iii) Note that
$H_2=\{(x_1, x_2, x_3): x_2>0, x_1=x_3=0\}$ and $H_3=\{(x_1, x_2,x_3): x_1=x_2=0, x_3>0\}$ and $H_1$ are invariant under system \eqref{sys5}. Since $0<\sigma<\sqrt{2\min\{\alpha_1, \alpha_2, \alpha_3\}}$, restricting system \eqref{sys5} on $H_i$ and  applying Lemma 3.4 and 3.5 in \cite{zhou2024stochastic} again there exists a nontrivial stationary measure denoted by $\mu_i$ supported on the positive $x_i$-axis, for each $i=1,2,3.$ Thus, system \eqref{sys5} has at least 4 stationary measures: $\delta_O$, $\mu_i$, $i=1,2,3.$
\hfill $\square$

\section*{Acknowledgments}
		The first and third authors are supported by the  National Natural Science Foundation of China (No. 12322109).

\end{document}